\documentclass[11pt]{article}%
\usepackage{amsfonts}
\usepackage{amsmath}
\usepackage{amssymb}
\usepackage{graphicx}
\usepackage[left=2.8cm,top=3.0cm,right=2.8cm,bottom=3 cm]{geometry}%
\newtheorem{theorem}{Theorem}

\newtheorem{definition}[theorem]{Definition}

\newtheorem{lemma}[theorem]{Lemma}

\newtheorem{proposition}[theorem]{Proposition}

\newenvironment{proof}[1][Proof]{\noindent\textbf{#1.} }{\ \rule{0.5em}{0.5em}}
\begin{document}

\title{Applying the Method of Moments to Examine Blow-Up Phenomena in KS Models with Variable Chemotactic Signals}
\author{Valeria Cuentas \& Elio Espejo \thanks{Corresponding author. \newline E-mail addresses: valeria.cuentasrodriguez@nottingham.edu.cn (V. Cuentas), elio-eduardo-espejo.arenas@nottingham.edu.cn(E. Espejo)}}
\date{}
\maketitle

\begin{abstract}
Cells encounter a diverse array of physical and chemical signals as they navigate their natural surroundings. However, their response to the simultaneous presence of multiple cues remains elusive. Particularly, the impact of topography alongside a chemotactic gradient on cell migratory behavior remains insufficiently explored. In this paper, we investigate the effects of topographical obstacles during chemotaxis. Our approach involves modifying the Keller-Segel model, incorporating a spatially dependent coefficient of chemotaxis. Through our analysis, we demonstrate that this coefficient plays a crucial role in preventing blow-up phenomena in cell concentration.

\bigskip

2000 Mathematics Subject Classification: 35K15, 35K55, 35Q60; Secondary 78A35

\end{abstract}

\section{Introduction}

Directed, single-cell migration is driven by external guidance cues, such as chemical, electrical, temperature, stiffness, and topographical gradients (cf. \cite{Cohen,Charras,Cuchi,Petrie,Poff,Rodriguez}). Natural cell environments often exhibit several such cues simultaneously. In the human body, processes occurring in multicue environments include immune response, cancer metastasis, and tissue regeneration. As of yet, it is unclear how external guidance cues relate to each other for various cell types and environments. Cells may ignore certain stimuli in favor of other cues or different cues might add up in affecting cell movement. In particular, contemporary research has illuminated the intricate relationship between chemotaxis and topography, shedding light on the influence of topographical cues on cellular chemotactic responses. Investigations have unveiled that the impact of topographical cues persists throughout cellular chemotaxis, with studies indicating that the topographical cue conserves its significance, contributing to the overall chemotactic effect (cf. \cite{Wondergem}).

In this paper, we focus on conditions that predict or prevent cell aggregation when obstacles interfere during the process. To this end, we propose to study the Keller-Segel-type model in a bounded domain $\Omega\subset\mathbb{R}^{n}$ given by:
\begin{equation}
\begin{cases}
\frac{\partial u}{\partial t}(x,t) = \operatorname{div}\left[ \nabla u(x,t) - \chi(x) u(x,t) \nabla v(x,t) \right], \quad t>0, x\in\Omega,\\
-\Delta v(x,t) = u(x,t), \quad t>0, x\in\Omega,\\
u(x,0) = u_{0}(x), \quad x\in\Omega.
\end{cases}
\label{eq}
\end{equation}

This model is complemented by the nonlinear no-flux condition:
\begin{equation}
\frac{\partial u(x,t)}{\partial n}-\chi(x)u(x,t)\frac{\partial v(x,t)}{\partial n}=0,
\label{initi}
\end{equation}

where $n$ denotes the outward unit normal vector to the $C^{1+\varepsilon}$ ($\varepsilon>0$) boundary $\partial\Omega$. Here, $u$ represents the cell density, $v$ denotes the chemoattractant, and $\chi(x)$ represents the chemical response influenced by a topographical cue. For the potential $v$, we assume:
\begin{equation}
v=K_{n}\ast u,
\label{convo}
\end{equation}

where $K_{n}$ is the fundamental solution of the $n$-dimensional Laplacian, namely, $K_{n}(x):=-\left\vert x\right\vert ^{2-n}/(\sigma_{n}(n-2)),$ $n\geq3,$where $\sigma_{n}$ is the volume of the unit $n-$ball and $K_{2}(x)=-\frac{1}{2\pi}\log\left\vert x\right\vert $. The initial-boundary value problem is supplemented with the initial condition:
\begin{equation}
u(x,0)=u_{0}(x)\geq0.
\label{positiv}
\end{equation}

The moment and mass be defined by $m(t):=\int u(x,t)\left\vert x\right\vert ^{2}dx$ and $M:=\int u_{0}dx=\int u(x,t)dx.$

For any arbitrary bounded smooth domains in $\mathbb{R}^{2}$ or in $\mathbb{R}^{3},$ the local-in-time existence of solutions in $L^{2}(\Omega)$ can be deduced from the proof provided in Theorem 1 of the reference \cite{BilerLocal}. Although this Theorem specifically addresses the scenario where $\chi(x)$ is constant, its existence argument seamlessly extends to our situation by assuming $\chi(x)\in L^{\infty}(\Omega)$. This argument relies on a standard application of the Schauder fixed-point theorem within an appropriate space of vector-valued functions in $L^{2}(\Omega)$.

\begin{definition}[Weak solution]
In the context of the problem (\ref{eq})-(\ref{positiv}) defined on $\Omega\times(0,T)$, weak $H^{1}(\Omega)$ solutions are understood as functions $u\in L^{\infty}\left( (0,T);L^{2}(\Omega)\right) \cap L^{2}\left( (0,T);H^{1}(\Omega)\right) $ which satisfy, for every test function $\eta\in H^{1}(\Omega\times(0,T))$ and for a.e. $t\in(0,T)$, the integral identity
\[
\int_{\Omega}u(x,t)\eta(x,t)dx-\int_{0}^{t}\int_{\Omega}u\eta_{t}+\int_{0}^{t}\int_{\Omega}(\nabla u+\chi(x)u\nabla v)\cdot\nabla\eta=\int_{\Omega}u_{0}(x)\eta(x,0)dx.
\]
Moreover, we require that for a.e. $t\in(0,T),v(\cdot,t)$ is a weak solution of (\ref{eq}) with
\[
v\in H^{1}(\Omega)\text{ with }\quad v=K_{n}\ast u.
\]
\end{definition}

\begin{theorem}[Local Existence]
Let $\Omega$ be a bounded domain in $\mathbb{R}^{n}$ with a boundary of class $C^{1+\varepsilon}$, where $\varepsilon>0$.

\begin{enumerate}
\item[(i)] For dimension $n=2$ or $n=3$, and initial data $0\leq u_{0}\in L^{2}(\Omega)$, there exists $T=T\left( \left\vert u_{0}\right\vert _{2}\right) $ such that the problem (\ref{eq})-(\ref{initi}) admits a unique weak solution $u\in L^{\infty}\left( (0,T);L^{2}(\Omega)\right) \cap L^{2}\left( (0,T);H^{1}(\Omega)\right) $. Additionally, $u_{t}\in L^{2}\left( (0,T);H^{-1}(\Omega)\right) $, $u(x,t)\geq0$ for almost every $x\in\Omega$ and $t\geq0$, and $\int_{\Omega}u(x,t)dx=\int_{\Omega}u_{0}(x)dx$.

\item[(ii)] For dimension $n\geq2$ and $0\leq u_{0}\in L^{p}(\Omega)$ with $p>n/2$, there exists $T=T\left( p,\left\vert u_{0}\right\vert _{p}\right) >0$ and a weak solution $u$ such that $u\in L^{\infty}\left( (0,T);L^{p}(\Omega)\right) $ and $u^{p/2}\in L^{2}\left( (0,T);H^{1}(\Omega)\right) $.
\end{enumerate}

These solutions are unique when $p>n$, and regular when $p>n/2$ in the sense that $u\in L_{loc\text{ }}^{\infty}\left( (0,T);L^{\infty}(\Omega)\right) $.
\end{theorem}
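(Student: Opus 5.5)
We follow the scheme of Theorem 1 in \cite{BilerLocal} and check that a bounded, spatially varying sensitivity $\chi\in L^{\infty}(\Omega)$ only enters through the factor $\|\chi\|_{\infty}$. The argument is a fixed point in a ball of $L^{\infty}((0,T);L^{2}(\Omega))\cap L^{2}((0,T);H^{1}(\Omega))$ for part (i), and of $L^{\infty}((0,T);L^{p}(\Omega))$ for part (ii).

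\textbf{Setting up the map.} Given $w$ in the ball, let $v_w=K_{n}\ast w$. Define $u=\Phi(w)$ as the solution of the \emph{linear} problem
\[
u_t=\operatorname{div}\left(\nabla u-\chi(x)\,u\,\nabla v_w\right)
\]
with the flux boundary condition (\ref{initi}) and data $u_{0}$. This linear problem is solved by Galerkin approximation, once a priori bounds are available.

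\textbf{A priori estimates for part (i).} The basic tool is the Hardy--Littlewood--Sobolev inequality. For $n\le 3$ it gives $\nabla v_w\in L^{q}$ with $\|\nabla v_w\|_{q}\le C\|w\|_{2}$ for some $q>n$; in particular $q=6$ when $n=3$, and any $q<\infty$ when $n=2$. Testing the equation with $u$ and applying H\"older gives
\[
\tfrac12\tfrac{d}{dt}\|u\|_{2}^{2}+\|\nabla u\|_{2}^{2}\le \|\chi\|_{\infty}\|\nabla v_w\|_{q}\,\|u\|_{2q/(q-2)}\,\|\nabla u\|_{2}.
\]
Next, Gagliardo--Nirenberg gives $\|u\|_{2q/(q-2)}\le C\|u\|_{2}^{1-\theta}\|u\|_{H^{1}}^{\theta}$ with $\theta=n/q<1$. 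Young's inequality then yields
\[
\tfrac{d}{dt}\|u\|_{2}^{2}+\|\nabla u\|_{2}^{2}\le C(\|\chi\|_\infty,\|w\|_{2})\|u\|_{2}^{2}.
\]
Gronwall's lemma shows that $\Phi$ maps the ball of radius $2\|u_{0}\|_{2}$ into itself provided $T=T(\|u_{0}\|_{2})$ is small.

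\textbf{Compactness, fixed point and properties.} The equation bounds $u_t$ in $L^{2}((0,T);H^{-1})$. By Aubin--Lions, $\Phi$ is compact into $L^{2}((0,T);L^{2}(\Omega))$. It is also continuous there, because $w\mapsto\nabla v_w$ is linear and bounded. Schauder's theorem then gives a fixed point, which is a weak solution in the sense of the Definition.

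Three further properties follow from the weak formulation:
\begin{itemize}
\item Mass conservation follows by taking $\eta\equiv1$.
\item Nonnegativity follows by testing with $u_-=\min(u,0)$. One gets $\frac{d}{dt}\|u_-\|_{2}^{2}\le C\|u_-\|_{2}^{2}$ by the same estimate, and $u_-(0)=0$.
\item Uniqueness follows by estimating the difference $u_1-u_2$ with the same inequality. The product term splits as $\chi(u_1-u_2)\nabla v_1+\chi u_2\nabla(v_1-v_2)$, and each piece is controlled by HLS and Gronwall.
\end{itemize}

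\textbf{Part (ii) and regularity.} Here we test with $u^{p-1}$. The chemotactic term gives
\[
\int\chi\, u\nabla v\cdot\nabla u^{p-1}=\tfrac{2(p-1)}{p}\int\chi\, u^{p/2}\nabla v\cdot\nabla u^{p/2}.
\]
Since $\chi$ is not constant, we \emph{cannot} integrate by parts to turn this into $\int u^{p+1}$ as in the constant case. This is the step I expect to be the main obstacle. My plan is to bound the term directly by
\[
\|\chi\|_{\infty}\|\nabla v\|_{r}\,\|u^{p/2}\|_{2r/(r-2)}\,\|\nabla u^{p/2}\|_{2}.
\]
HLS gives $\nabla v\in L^{r}$ with $1/r=1/p-1/n$ for $p<n$, and $r=\infty$ essentially for $p>n$. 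The factor $\|u^{p/2}\|_{2r/(r-2)}$ is then interpolated between $\|u^{p/2}\|_{2}$ and $\|\nabla u^{p/2}\|_{2}$. The condition $p>n/2$ is exactly what makes the Gagliardo--Nirenberg exponent $<1$, so the gradient can be absorbed and a differential inequality for $\|u\|_{p}^{p}$ with local solvability results. For uniqueness we need $p>n$, which makes $\nabla v$ bounded. For regularity we run a Moser--Alikakos iteration on $\|u\|_{p_k}$ with $p_k=2^{k}p$, each step using the same absorption, to obtain $u\in L^{\infty}_{loc}((0,T);L^{\infty}(\Omega))$.
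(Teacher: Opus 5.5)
Your proposal is correct and follows the same route as the paper. The paper's proof is only a pointer to Biler's Schauder fixed-point construction (Theorem 1 and Proposition 1 of the cited work), and your sketch follows that construction.

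You also make explicit the ``minor modifications'' the paper leaves implicit. The variable sensitivity enters only through $\|\chi\|_{\infty}$, because the chemotactic term is bounded directly by H\"older, HLS and Gagliardo--Nirenberg estimates rather than by integrating by parts.
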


\begin{proof}
The proof follows the same argument of \cite[Theorem 1, Proposition 1]{BilerLocal} with minor modifications.
\end{proof}

\section{The role of topography during cell aggregation}

Let us denote by $\chi:\mathbb{R}^{2}\rightarrow\mathbb{R}^{+}$ a positive, smooth function that increases radially, i.e.,
\begin{equation}
\chi(x)\geq\chi(y)\text{ \ if \ \ }\left\vert x\right\vert \geq\left\vert y\right\vert .
\label{RadialNondecrea}
\end{equation}
An example of this kind of function can be constructed by choosing an increasing bounded smooth function $f:\mathbb{R}\rightarrow\mathbb{R}^{+}$ and defining $\chi(x):=f(\left\vert x\right\vert ^{2}).$ Specific examples are $\chi(x)=\frac{\left\vert x\right\vert ^{2}}{1+\left\vert x\right\vert ^{2}}+1$ and $\arctan\left\vert x\right\vert ^{2}$. One example of such a function without being radially symmetric is
\[
\chi(x_{1},x_{2})=\left\{
\begin{array}
[c]{cc}%
x_{1}^{2}/\left\vert x\right\vert , & \text{for }(x_{1},x_{2})\neq(0,0)\\
0, & \text{at the point }(0,0)
\end{array}
\right.
\]
We call $\Omega$ a star-shaped domain if there exists $x_{0}\in\mathbb{R}^{n}$ such that
\[
(x-x_{0})\cdot\nu\geq0\text{ for all }x\in\partial\Omega,
\]
where $\nu$ is the unit outward normal to $\partial\Omega$ at $x,$ cf. \cite{Pohoazev}.

\begin{theorem}[Blow-up in dimension two]
If $\Omega\subset\mathbb{R}^{2}$, is a star-shaped domain with respect to $0\in\Omega$, and $\chi:\mathbb{R}^{2}\rightarrow\mathbb{R}^{+}$ is a function that satisfies the monotonicity condition (\ref{RadialNondecrea}) and $\chi(\mathbf{0})>0$, then for initial data satisfying $\int_{\Omega}u_{0}dx=:M>\frac{8\pi}{\chi(0)}$, there are no global solutions to (\ref{eq})-(\ref{positiv}).
\end{theorem}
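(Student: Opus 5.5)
We argue by contradiction using the second moment $m(t)=\int_\Omega u(x,t)|x|^2\,dx$. Suppose a global solution exists; by the local existence theorem and the regularity statement there, it is regular enough to justify the computations below, and mass is conserved, so $\int_\Omega u(x,t)\,dx=M$ for all $t$. We will show that $m'(t)\le 4M-\frac{\chi(0)}{2\pi}M^2<0$. Since $m(t)\ge 0$, $m$ cannot decrease linearly forever, so the solution must cease to exist in finite time.

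First differentiate $m$. Testing the equation with $|x|^2$ and integrating by parts gives
\[
m'(t)=\int_{\partial\Omega}|x|^2\Big(\frac{\partial u}{\partial n}-\chi u\frac{\partial v}{\partial n}\Big)dS-\int_\Omega 2x\cdot(\nabla u-\chi u\nabla v)\,dx .
\]
The boundary term vanishes by the no-flux condition (\ref{initi}). The diffusion part becomes
\[
-\int_\Omega 2x\cdot\nabla u=-2\int_{\partial\Omega}u\,(x\cdot\nu)\,dS+2n\int_\Omega u\le 4M .
\]
Here we used $n=2$, $u\ge0$ and star-shapedness with respect to $0$, i.e. $x\cdot\nu\ge 0$ on $\partial\Omega$. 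It remains to treat the chemotactic term
\[
I(t):=2\int_\Omega \chi(x)u(x)\,x\cdot\nabla v(x)\,dx .
\]

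Second, use $v=K_2\ast u$, so that $\nabla v(x)=-\frac{1}{2\pi}\int_\Omega \frac{x-y}{|x-y|^2}u(y)\,dy$. Then
\[
I=-\frac{1}{\pi}\iint_{\Omega\times\Omega}\chi(x)\frac{x\cdot(x-y)}{|x-y|^2}u(x)u(y)\,dx\,dy .
\]
Symmetrize by swapping $x\leftrightarrow y$ and averaging:
\[
I=-\frac{1}{2\pi}\iint \frac{\chi(x)x\cdot(x-y)-\chi(y)y\cdot(x-y)}{|x-y|^2}u(x)u(y) .
\]
Write
\[
\chi(x)x-\chi(y)y=\tfrac{\chi(x)+\chi(y)}{2}(x-y)+\tfrac{\chi(x)-\chi(y)}{2}(x+y).
\]
The numerator then splits into $\frac{\chi(x)+\chi(y)}{2}|x-y|^2$ and $\frac{\chi(x)-\chi(y)}{2}(|x|^2-|y|^2)$.

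The key observation is that the monotonicity (\ref{RadialNondecrea}) makes $(\chi(x)-\chi(y))(|x|^2-|y|^2)\ge 0$, so that term only makes $I$ more negative and can be dropped. The first term gives
\[
I\le -\frac{1}{2\pi}\iint\frac{\chi(x)+\chi(y)}{2}u(x)u(y)\le-\frac{\chi(0)}{2\pi}M^2 .
\]
This uses $\chi(x)\ge\chi(0)$, which again follows from (\ref{RadialNondecrea}).

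Hence $m'(t)\le 4M-\frac{\chi(0)}{2\pi}M^2=\frac{M}{2\pi}\big(8\pi-\chi(0)M\big)<0$ when $M>8\pi/\chi(0)$. Integrating gives $m(t)\le m(0)-ct$, which becomes negative for large $t$. This contradicts $m\ge0$, so no global solution exists.

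The main obstacle is justifying these manipulations rigorously. One issue is integrating the singular kernel against $u\otimes u$; this is fine for $u\in L^\infty_{loc}$, via the regularity part of the local existence theorem. The other is reconciling the convolution definition $v=K_2\ast u$ with the boundary condition, i.e. making sure the no-flux boundary term really drops. The sign bookkeeping in the symmetrization, especially the mixed $(\chi(x)-\chi(y))$ term, is where I would check most carefully.
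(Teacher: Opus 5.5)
Your proposal is correct and follows essentially the same route as the paper. Both arguments compute the second moment, drop the boundary term using star-shapedness, symmetrize the chemotactic integral, use the splitting $2[\chi(x)x-\chi(y)y]\cdot(x-y)=(\chi(x)+\chi(y))|x-y|^2+(|x|^2-|y|^2)(\chi(x)-\chi(y))$ together with monotonicity to discard the mixed term and bound by $\chi(\mathbf{0})$, and arrive at $m'(t)\le 4M-\frac{\chi(\mathbf{0})}{2\pi}M^2$ (your constant bookkeeping is in fact cleaner than the paper's, which carries a stray factor $\chi$ in front of the interaction term).
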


\begin{proof}
To simplify, we provide a formal argument demonstrating that the second moment $\int_{\Omega}u|\mathbf{x}|^{2}\,dx$ becomes negative in a finite amount of time. The computations below can be justified by writing the integral version of the corresponding differential inequalities.
First, we observe that the cell-density $u$ satisfies:
\begin{align*}
\frac{d}{dt}\int_{\Omega}u|\mathbf{x}|^{2}\,dx  = & -2\int_{\Omega}\nabla u\cdot\nabla x\,dx + 2\chi\int_{\Omega}x\cdot(u\chi(x)\nabla(K_{2}\ast u))\,dx \\
 = & -2\int_{\partial\Omega}u(x\cdot\nu)\,dx + 4\int_{\Omega}u\,dx + 2\chi\int_{\Omega}x\cdot u\chi(x)\int_{\Omega}\frac{-1}{2\pi}\frac{x-y}{|x-y|^{2}}u\,dy\,dx \\
 = & 4M - \frac{\chi}{\pi}I.
\end{align*}
Since $\Omega$ is a star-shaped domain with respect to $0$, we have $x\cdot\nu\geq0$ on $\partial\Omega,$ thus
\begin{align*}
\frac{d}{dt}\int_{\Omega}u|\mathbf{x}|^{2}\,dx \leq & 4\int_{\Omega}u\,dx + 2\chi\int_{\Omega}x\cdot u\chi(x)\int_{\Omega}\frac{-1}{2\pi}\frac{x-y}{|x-y|^{2}}u\,dy\,dx \\
 \leq & 4\int_{\Omega}u_{0}\,dx + 2\chi\int_{\Omega}x\cdot u\chi(x)\int_{\Omega}\frac{-1}{2\pi}\frac{x-y}{|x-y|^{2}}u\,dy\,dx
\end{align*}
Next, we interchange $x$ and $y$ in the integral $I$ to get
\[
I = -\frac{1}{2}\int_{\Omega\times\Omega}\left(  y\cdot\chi(y)\frac{x-y}{\left\vert x-y\right\vert ^{2}}u(x,t)u(y,t)dy\right)  dx,
\]
and hence,
\[
\frac{d}{dt}\int_{\Omega}u|\mathbf{x}|^{2}\,dx = 4M - \frac{\chi}{2\pi}\int_{\Omega\times\Omega}[\chi(x)x-\chi(y)y]\cdot\frac{x-y}{\left\vert x-y\right\vert ^{2}}u(x,t)u(y,t)dydx.
\]
A main difficulty arising at this point lies in estimating the last integral. To address this, we observe that
\[
2[\chi(x)x-\chi(y)y]\cdot\frac{x-y}{\left\vert x-y\right\vert ^{2}} = \chi(x) + \chi(y) + \frac{|\mathbf{x}|^{2}-|\mathbf{y}|^{2}}{|x-y|^{2}}[\chi(x)-\chi(y)].
\]
which can be straightforwardly verified by expanding and comparing the expressions arising on each side of the equivalent identity
\[
2[\chi(x)x-\chi(y)y]\cdot\left(  x-y\right)  = \left(  \chi(x)+\chi(y)\right)\left\vert x-y\right\vert ^{2} + \left(  |\mathbf{x}|^{2}-|\mathbf{y}|^{2}\right)  [\chi(x)-\chi(y)].
\]
Next, we apply the monotonicity property (\ref{RadialNondecrea}) to obtain
\[
[\chi(x)x-\chi(y)y]\cdot\frac{x-y}{\left\vert x-y\right\vert ^{2}} \geq \frac{1}{2}\chi(x) + \frac{1}{2}\chi(y) \geq \chi(\mathbf{0}),
\]
leading to the key estimate
\begin{align*}
\frac{d}{dt}\int_{\Omega}u|\mathbf{x}|^{2}\,dx \leq & 4M - \frac{\chi(\mathbf{0})}{2\pi}\int_{\Omega\times\Omega}u(x,t)u(y,t)dydx \\
 = & 4M - \frac{\chi(\mathbf{0})}{2\pi}M^{2}.
\end{align*}
Considering the assumption $\chi(0)>0$, we conclude that the second moment becomes negative in a finite amount of time if
\[
M > \frac{8\pi}{\chi(\mathbf{0})},
\]
which is absurd since $u$ remains nonnegative.
\end{proof}

\begin{theorem}[Blow-up in dimension $n\geq3$]
Let $\Omega\subset\mathbb{R}^{n}$ be a star-shaped domain with respect to $0\in\Omega$ and $p$ a constant satisfying $2\leq p\leq n$. Assume that the initial data satisfies $\int_{\Omega}u_{0}dx=:M>\frac{2^{n}n\sigma_{n}}{\chi}.$ Then the system
\[
\begin{array}
[c]{cc}%
\partial_{t}u=\Delta u-\chi\nabla\cdot(\left\vert x\right\vert ^{p-2}u\nabla v), & x\in\mathbb{R}^{n},t>0,\\
-\Delta v=u,\text{ }v(x,t)=\frac{-1}{\sigma_{n}}\int u(y,t)\left\vert x-y\right\vert ^{2-n}dy & x\in\mathbb{R}^{n},t>0,\\
u(x,0)=u_{0}(x)\geq0, & x\in\mathbb{R}^{n},
\end{array}
\]
does not have global solutions.
\end{theorem}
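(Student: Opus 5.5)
The plan is to run the same second-moment (virial) argument as in the proof of the two-dimensional blow-up theorem, now with $\chi(x)$ replaced by $\chi|x|^{p-2}$. I will show that $m(t)=\int_{\Omega}u|x|^{2}\,dx$ satisfies a differential inequality $m'(t)\le 2nM-C\chi M^{2}$. When $M$ exceeds the stated threshold the right-hand side is a negative constant, so $m$ would become negative in finite time. That is impossible since $u\ge0$. As in the two-dimensional case I will argue formally and note that the computation is justified through the integrated form of the inequality.

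\textbf{Step 1: differentiate the moment.} Multiply the equation by $|x|^{2}$ and integrate by parts using the no-flux condition (\ref{initi}). This gives
\[
m'(t)=-2\int_{\partial\Omega}u\,(x\cdot\nu)\,dS+2nM+2\chi\int_{\Omega}|x|^{p-2}u\,x\cdot\nabla v\,dx .
\]
Star-shapedness with respect to $0$ gives $x\cdot\nu\ge0$, so the boundary term can be dropped. Next write $\nabla v=-c_{n}\int_{\Omega}\frac{x-y}{|x-y|^{n}}u(y)\,dy$, with $c_{n}>0$ fixed by the normalization of the kernel and the sign chosen so that the drift is attractive, as in the paper's convention. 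Symmetrizing in $x\leftrightarrow y$ exactly as was done for $I$ in the two-dimensional proof yields
\[
m'(t)\le 2nM-c_{n}\chi\int_{\Omega\times\Omega}\bigl(|x|^{p-2}x-|y|^{p-2}y\bigr)\cdot\frac{x-y}{|x-y|^{n}}\,u(x)u(y)\,dx\,dy .
\]

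\textbf{Step 2: lower bound on the kernel (the key step).} This plays the role of the identity for $\chi(x)x-\chi(y)y$ in the previous proof. Here I would use the standard monotonicity inequality for the $p$-Laplacian vector field, valid for $p\ge2$:
\[
\bigl(|x|^{p-2}x-|y|^{p-2}y\bigr)\cdot(x-y)\ \ge\ 2^{2-p}|x-y|^{p}.
\]
One way to prove it is to write $|x|^{p-2}x-|y|^{p-2}y=\int_{0}^{1}D(|z|^{p-2}z)\big|_{z=y+s(x-y)}(x-y)\,ds$, use $D(|z|^{p-2}z)\ge|z|^{p-2}I$, and bound $\int_{0}^{1}|y+s(x-y)|^{p-2}ds\ge c|x-y|^{p-2}$. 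With it, the integrand is bounded below by $2^{2-p}|x-y|^{p-n}$.

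\textbf{Step 3: remove the factor $|x-y|^{p-n}$.} When $p=n$ this factor is $1$. When $p<n$ the exponent is negative, so I need an upper bound on $|x-y|$. I expect this to be the main obstacle, because the threshold $2^{n}n\sigma_{n}/\chi$ carries no diameter dependence. I will therefore normalize $\Omega$ to lie in the unit ball, so that $|x-y|\le2$ and $|x-y|^{p-n}\ge2^{p-n}$. For a general domain the diameter will enter the threshold. The combined constant is then $2^{2-p}\cdot2^{p-n}=2^{2-n}$, uniformly in $p\in[2,n]$, which gives
\[
m'(t)\le 2nM-c_{n}2^{2-n}\chi M^{2}.
\]

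\textbf{Step 4: conclude.} With $c_{n}$ tied to $\sigma_{n}$ as in the stated kernel, the right-hand side is a negative constant precisely when $M>2^{n}n\sigma_{n}/\chi$, up to tracking $c_{n}$ carefully. Then $m(t)\le m(0)-\delta t$ for some $\delta>0$, so $m$ becomes negative in finite time, contradicting nonnegativity of $u$. Hence no global solution exists.
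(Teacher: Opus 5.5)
Your Steps 1--2 and the whole argument for $p=n$ follow the paper's proof: the virial identity, the symmetrization $x\leftrightarrow y$, and the $p$-Laplacian inequality of \cite{Neta}. Your term $2nM$ is actually the correct one. The paper writes $4M$, which is the two-dimensional value of $\int u\,\Delta|x|^{2}$.

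This makes your Step 4 less harmless than ``up to tracking $c_n$'' suggests. The paper's proof uses the kernel with $c_n=1/(n\sigma_n)$. With that kernel your inequality gives blow-up only for $M>2^{n-1}n^{2}\sigma_n/\chi$, which exceeds the stated $2^{n}n\sigma_n/\chi$ when $n\ge3$. You recover the stated threshold, and in fact the better one $2^{n-1}n\sigma_n/\chi$, only if $\sigma_n$ denotes $|S^{n-1}|$, so that $c_n=1/\sigma_n$. That is the normalization implicit in the paper's formula for $K_n$. Make this convention explicit.

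A minor point: your sketch of the $p$-Laplacian inequality discards the term $(p-2)|z|^{p-4}z\otimes z$. Done that way, it only yields the constant $2^{2-p}/(p-1)$. Keep that term, or simply cite the inequality.

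For $2\le p<n$ you take a genuinely different route from the paper.
\begin{itemize}
\item The paper keeps the factor $|x-y|^{p-n}$ and uses the H\"older interpolation of Lemma \ref{Inequality2}, $J\ge M^{2+(n-p)/2}(2m)^{(p-n)/2}$. This gives $m'\le f(m)$ with $f$ increasing, and blow-up under the extra concentration hypothesis (\ref{CB}) on $m(0)$.
\item You bound $|x-y|^{p-n}$ from below by the diameter. This gives a constant right-hand side and a mass-only criterion, with the same factor $2^{2-n}$ for every $p$.
\end{itemize}

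However, ``normalize $\Omega$ to lie in the unit ball'' is not without loss of generality. The function $u_\lambda(x,t)=\lambda^{p}u(\lambda x,\lambda^{2}t)$ solves the same problem on the star-shaped domain $\Omega/\lambda$, with the same $\chi$ and the same no-flux condition, and it has mass $\lambda^{p-n}M$. So for $p<n$ a domain-independent mass threshold cannot imply blow-up. Letting $\lambda\to0$ would force every nontrivial solution to blow up, which is incompatible with small-data global existence. The mass-only statement can therefore hold only for $p=n$, and both you and the paper must add a hypothesis when $p<n$.

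The two added hypotheses trade off differently:
\begin{itemize}
\item Yours, $M>C(\operatorname{diam}\Omega)^{n-p}/\chi$, is elementary and avoids the ODE comparison, but it deteriorates on large domains.
\item The paper's (\ref{CB}) is scale-invariant and independent of the size of $\Omega$, at the price of constraining the initial second moment rather than the mass alone.
\end{itemize}
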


\begin{proof}
We apply the moments' technique as follows.
\begin{align*}
\frac{d}{dt}\int_{\Omega}u\left\vert x\right\vert ^{2}dx = & -2\int_{\Omega}x\cdot\nabla udx + 2\chi\int_{\Omega}x\cdot\left(  \left\vert x\right\vert ^{p-2}u\nabla v\right)  dx \\
 = & 4\int_{\Omega}udx + 2\chi\int_{\Omega}x\cdot\left(  \left\vert x\right\vert ^{p-2}u\nabla(K\ast u)\right)  dx \\
 = & 4\int_{\Omega}u_{0}dx - \frac{2\chi}{n\sigma_{n}}\int_{\Omega}x\cdot\left\vert x\right\vert ^{p-2}u\left(  \int_{\Omega}\frac{x-y}{\left\vert x-y\right\vert ^{n}}u(y,t)dy\right)  dx \\
 = & 4M - \frac{2\chi}{n\sigma_{n}}\int_{\Omega\times\Omega}\left(  x\left\vert x\right\vert ^{p-2}\cdot\frac{x-y}{\left\vert x-y\right\vert ^{n}}u(x,t)u(y,t)dy\right)  dx \\
 = & 4M - \frac{2\chi}{n\sigma_{n}}I.
\end{align*}
We interchange $x$ and $y$ in the integral $I$ to get
\[
I = -\frac{1}{2}\int_{\Omega\times\Omega}\left(  y\left\vert y\right\vert ^{p-2}\cdot\frac{x-y}{\left\vert x-y\right\vert ^{n}}u(x,t)u(y,t)dy\right)  dx.
\]
Thus
\[
\frac{d}{dt}\int_{\Omega}u\left\vert x\right\vert ^{2}dx = 4M - \frac{\chi}{\sigma_{n}}\int_{\Omega\times\Omega}\left(  \left\vert x\right\vert ^{p-2}x-\left\vert y\right\vert ^{p-2}y\right)  \cdot\frac{x-y}{\left\vert x-y\right\vert ^{n}}u(x,t)u(y,t)dydx.
\]
Using the inequality (cf. \cite{Neta})
\[
\left(  \left\vert x\right\vert ^{p-2}x-\left\vert y\right\vert ^{p-2}y\right)  \cdot\left(  x-y\right)  \geq 2^{2-p}\left\vert x-y\right\vert ^{p},
\]
for all $x,y\in\mathbb{R}^{n}$, and $p\geq2$, (cf. \cite{Neta})$,$ we get
\begin{equation}
\frac{d}{dt}\int_{\Omega}u\left\vert x\right\vert ^{2}dx \leq 4M - \frac{2^{2-p}\chi}{n\sigma_{n}}\int_{\Omega\times\Omega}\left\vert x-y\right\vert ^{p-n}u(x,t)u(y,t)dydx.
\label{th}
\end{equation}
We consider separately now two cases: $p=n$ and $2\leq p<n.$ Firstly, when $p=n,$ we obtain from (\ref{th})
\begin{align}
\frac{d}{dt}\int_{\Omega}u\left\vert x\right\vert ^{2}dx \leq & 4M - \frac{2^{2-n}\chi}{n\sigma_{n}}\int_{\Omega\times\Omega}u(x,t)u(y,t)dydx \\
 = & 4M - \frac{2^{2-n}\chi}{n\sigma_{n}}M^{2}.
\end{align}
Then, we conclude that we have blow-up if $4 < 2^{2-n}\chi M/n\sigma_{n}$ or equivalently
\[
M > \frac{2^{n}n\sigma_{n}}{\chi}.
\]
For $2\leq p<n,$ using Lemma \ref{Inequality2} (See \cite[Lemma 3.2.]{Biler}), we estimate the last integral as
\[
\int_{\Omega\times\Omega}\left\vert x-y\right\vert ^{p-n}u(x,t)u(y,t)dydx \geq M^{2+\left(  n-p\right) /2}\left(  2m(t)\right)  ^{\left(  p-n\right) /2}.
\]
Thus, we get
\begin{align*}
\frac{d}{dt}m(t) \leq & 4M - \frac{2^{2-\left(  p+n\right) /2}\chi}{n\left\vert B_{1}(0)\right\vert }M^{2+\left(  n-p\right) /2}\left(m(t)\right)  ^{\left(  p-n\right) /2} \\
 = & f(m(t)),
\end{align*}
and we let $f(m(0))<0,$ or equivalently
\begin{equation}
m(0)<\left(  \frac{\chi}{2^{\left(  p+n\right) /2}n\left\vert B_{1}(0)\right\vert }\right)  ^{2/\left(  n-p\right) }M^{\left(  n-p+2\right) /\left(  n-p\right) }=:CM^{\left(  n-p+2\right) /\left(  n-p\right) }.
\label{CB}
\end{equation}
Taking into account that the condition (\ref{CB}) implies that $m(t)$ is decreasing for $t$ small enough and the fact that $f$ is an increasing function of $m$, we conclude that the right-hand side is always negative and bounded away from $f(m(0))<0.$ It follows from (\ref{CB}) that $m(t)$ will become negative in a finite amount of time. On the other hand, $m(t)$ remains always positive due to the nonnegativity of the variable $u$. This contradiction implies $T_{\max}<\infty.$
\end{proof}

\bigskip

\begin{lemma}
\label{Inequality2}
Let for a density $0\leq u\in L^{1}(\mathbb{R}^{n},(1+\left\vert x\right\vert ^{2})dx)$ the moment and mass be defined by $m=\int u(x)\left\vert x\right\vert ^{2}dx$ and $M=\int u(x)dx,$ respectively. Then for the integral
\[
J=\int_{\mathbb{R}^{n}\times\mathbb{R}^{n}}u(x)u(y)\left\vert x-y\right\vert ^{p-n}dydx,
\]
with $p\leq n,$ the inequality
\begin{equation}
M^{2+\left(  n-p\right) /2}\leq J\left(  2m\right)  ^{\left(  n-p\right) /2},
\label{In}
\end{equation}
holds.
\end{lemma}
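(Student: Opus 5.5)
Set $\alpha:=n-p\ge 0$; the claim reads $M^{2+\alpha/2}\le J\,(2m)^{\alpha/2}$. For $\alpha=0$ it is trivial, since then $J=M^2$ (with the convention $|x-y|^0=1$). So assume $\alpha>0$ and $m<\infty$. The plan is to compare the kernel $|x-y|^{-\alpha}$ with $|x-y|^{2}$, whose double integral is computed exactly by the second moment, and then apply Hölder's inequality on the product measure $d\mu:=u(x)u(y)\,dx\,dy$ on $\mathbb{R}^n\times\mathbb{R}^n$. This measure has total mass $M^2$.

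The first step is the identity
\[
\int_{\mathbb{R}^n\times\mathbb{R}^n}|x-y|^{2}\,d\mu=\int\!\!\int\big(|x|^2-2x\cdot y+|y|^2\big)u(x)u(y)\,dx\,dy=2Mm-2\Big|\int xu(x)\,dx\Big|^2\le 2Mm.
\]
The first moment is finite because $u\in L^1((1+|x|^2)dx)$.

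The second step applies Hölder to the constant function $1$, written as the product $|x-y|^{-\alpha\theta}\cdot|x-y|^{\alpha\theta}$. Choose the exponents so that the two factors produce $J$ and the quadratic integral: use $r=(\alpha+2)/2$ on the first factor with $\alpha\theta r=\alpha$, and the conjugate $r'=(\alpha+2)/\alpha$ on the second with $\alpha\theta r'=2$. Indeed $\theta=2/(\alpha+2)$ makes both hold, and $1/r+1/r'=2/(\alpha+2)+\alpha/(\alpha+2)=1$. This gives
\[
M^2=\int 1\,d\mu\le J^{2/(\alpha+2)}\Big(\int|x-y|^{2}\,d\mu\Big)^{\alpha/(\alpha+2)}\le J^{2/(\alpha+2)}(2Mm)^{\alpha/(\alpha+2)}.
\]

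The last step raises both sides to the power $(\alpha+2)/2$, which yields $M^{\alpha+2}\le J\,(2Mm)^{\alpha/2}$. Dividing by $M^{\alpha/2}$ gives $M^{2+\alpha/2}\le J(2m)^{\alpha/2}$, which is (\ref{In}).

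The only delicate point is choosing the Hölder exponents correctly; the rest is bookkeeping. Degenerate cases need a brief remark: if $J=\infty$ or $m=\infty$ there is nothing to prove, and if $M=0$ the inequality is trivial. Applying Hölder with possibly infinite integrals is harmless because all integrands are nonnegative.
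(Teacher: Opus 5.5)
Your proposal is correct and follows the paper's proof: both apply Hölder's inequality on the product measure $u(x)u(y)\,dx\,dy$ with conjugate exponents $(n-p+2)/2$ (on the $J$ factor) and $(n-p+2)/(n-p)$ (on the $|x-y|^2$ factor), use the identity $\int\!\!\int u(x)u(y)|x-y|^2\,dx\,dy = 2Mm - 2|\int xu\,dx|^2 \le 2Mm$, then raise to the power $(n-p+2)/2$ and divide by $M^{(n-p)/2}$. Your treatment of the degenerate cases ($p=n$, $M=0$, infinite integrals) and the final algebra is spelled out more carefully than in the paper, which leaves both implicit.
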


\begin{proof}
Using the Holder inequality, we have that
\begin{align*}
M^{2} = & \int_{\mathbb{R}^{n}\times\mathbb{R}^{n}}u(x)u(y)dxdy \\
 \leq & \left(  \int_{\mathbb{R}^{n}\times\mathbb{R}^{n}}u(x)u(y)\left\vert x-y\right\vert ^{2}dxdy\right)  ^{1-\frac{2}{n-p+2}}\left(  \int_{\mathbb{R}^{n}\times\mathbb{R}^{n}}u(x)u(y)\left\vert x-y\right\vert ^{p-n}dxdy\right)  ^{\frac{2}{n-p+2}} \\
 = & \left(  \int_{\mathbb{R}^{n}\times\mathbb{R}^{n}}u(x)u(y)\left(  \left\vert x\right\vert ^{2}+\left\vert y\right\vert ^{2}-2x\cdot y\right)  dxdy\right)  ^{1-\frac{2}{n-p+2}}J^{\frac{2}{n-p+2}} \\
 \leq & \left(  2Mm-2\left\vert \int_{\mathbb{R}^{n}}xu(x)dx\right\vert ^{2}\right)  ^{1-\frac{2}{n-p+2}}J^{\frac{2}{n-p+2}}.
\end{align*}
which implies (\ref{In}).
\end{proof}

\section{Global existence for the case $\chi(x)\propto\left\vert x\right\vert ^{n-2}$}

Throughout this section, we assume that
\[
\Omega=\left\{  x\in\boldsymbol{R}^{n}||x\mid<L\right\}  ,n=2,3,4,5,\ldots,0<L<\infty.
\]

We discuss in this section the global existence of radially symmetric densities $u(x,t)=u(|x|,t)$ in the ball $B(0,R)\subset\mathbb{R}^{n}$ satisfying the system
\begin{equation}
\begin{array}
[c]{cc}%
\partial_{t}u=\Delta u-\chi\nabla\cdot(\left\vert x\right\vert ^{n-2}u\nabla v), & x\in\Omega,t>0,\\
-\Delta v=u, & x\in\Omega,t>0,\\
\frac{\partial u}{\partial n}=\frac{\partial v}{\partial n}=0, & x\in\partial\Omega,t>0,\\
u(x,0)=u_{0}(x)\geq0, & x\in\Omega.
\end{array}
\label{S}
\end{equation}

The following proposition and lemmas are shown for solutions to the case when $\chi(x)$ remains constant for every $x$ in \cite{Nagai}. However, by using a similar argument as the one in \cite{Nagai}, we can show the following lemma for solutions to (\ref{S}). Hence, here we omit the proofs.

\begin{proposition}
The system (\ref{S}) has a unique classical solution $u$ in $\Omega\times\left( 0,T_{\max}\right)$. Moreover, $u$ is positive in $\bar{\Omega}\times\left( 0,T_{\max}\right)$.
\end{proposition}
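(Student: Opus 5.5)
I would prove the proposition by treating (\ref{S}) as a parabolic equation for $u$ with a nonlocal drift, and following the fixed-point and bootstrapping scheme of \cite{Nagai}. The weight $\chi|x|^{n-2}$ is smooth (a polynomial when $n$ is even, and $C^{1,1}$ with bounded derivatives on $\bar\Omega$ in general) and bounded on the ball $\Omega$. So the only change from the constant-$\chi$ case is that the drift coefficient $b(x,t)=\chi|x|^{n-2}\nabla v$ carries an extra bounded, Lipschitz factor.

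\textbf{Step 1: the elliptic part.} Given $u(\cdot,t)$, let $v(\cdot,t)$ be the Neumann solution of $-\Delta v=u$ in $\Omega$. Compatibility requires $\int_\Omega u=0$, so, as in \cite{Nagai}, I would work with $-\Delta v=u-\frac{1}{|\Omega|}\int_\Omega u$, normalized by $\int_\Omega v=0$. By elliptic $L^p$ and Schauder theory,
\[
\|\nabla v\|_{W^{1,q}}\le C\|u\|_{q}, \qquad \|\nabla v\|_{C^{1+\alpha}}\le C\|u\|_{C^{\alpha}}.
\]

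\textbf{Step 2: local existence.} Fix $q>n$ and $T>0$ small, and let
\[
X=\Bigl\{u\in C([0,T];L^q(\Omega)) : \sup_{t}\|u(t)\|_q\le 2\|u_0\|_q\Bigr\}.
\]
Given $\tilde u\in X$, define $u=\Phi(\tilde u)$ as the solution of the linear Neumann problem
\[
u_t=\Delta u-\nabla\cdot\bigl(\chi|x|^{n-2}u\nabla\tilde v\bigr).
\]
Using the Neumann heat semigroup estimate
\[
\|\nabla e^{t\Delta}\cdot f\|_q\le C t^{-1/2}\|f\|_q
\]
together with $\|\,|x|^{n-2}\|_\infty\le L^{n-2}$, the Duhamel formula shows that $\Phi$ maps $X$ into itself and is a contraction for $T$ small. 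This gives a unique mild solution. Uniqueness in the classical class then follows from the same contraction estimate, or from an $L^2$ energy estimate on the difference of two solutions.

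\textbf{Step 3: regularity.} Once $u\in L^\infty(0,T;L^q)$ with $q>n$, Step 1 gives $\nabla v\in L^\infty(0,T;C^{\alpha})$. Parabolic $L^p$ estimates then give $u\in W^{2,1}_p$ locally, so $u$ is Hölder continuous. Schauder estimates for $v$ and then for $u$ yield $u\in C^{2+\alpha,1+\alpha/2}(\bar\Omega\times(0,T))$. Differentiating the product $|x|^{n-2}\nabla v$ only needs $|x|^{n-2}\in C^{1,\alpha}(\bar\Omega)$, which holds for every $n\ge2$ since $n-2\ge0$. Extending the solution by continuation up to a maximal time $T_{\max}$ is standard.

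\textbf{Step 4: positivity.} Write the equation in nondivergence form:
\[
u_t=\Delta u-\chi|x|^{n-2}\nabla v\cdot\nabla u-\chi\,\nabla\!\cdot\!\bigl(|x|^{n-2}\nabla v\bigr)\,u .
\]
Here
\[
\nabla\!\cdot\!\bigl(|x|^{n-2}\nabla v\bigr)=(n-2)|x|^{n-4}\,x\cdot\nabla v+|x|^{n-2}\Delta v .
\]
The main obstacle is that when $n=3$ the first term contains $|x|^{-1}x\cdot\nabla v$, which is bounded but not smooth at the origin. For radial solutions this is harmless, because $\nabla v(0,t)=0$ and $x\cdot\nabla v=r v_r=O(r^2)$ near $0$. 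So the zeroth-order coefficient is bounded on compact time subintervals of $(0,T_{\max})$. The weak maximum principle gives $u\ge0$, and the strong maximum principle with the Hopf lemma (the Neumann condition holds on $\partial\Omega$) upgrades this to $u>0$ on $\bar\Omega\times(0,T_{\max})$, provided $u_0\not\equiv0$.
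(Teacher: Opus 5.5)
Your scheme (contraction in $C([0,T];L^q)$ with $q>n$, a bootstrap to classical regularity, then the maximum principle) is the Nagai argument that the paper invokes without details. However, Step 3 rests on a false claim, and it fails exactly in the one case where the weight matters. For $n=3$ the weight is $|x|$, which is only Lipschitz: its gradient $x/|x|$ jumps at the origin, so $|x|^{n-2}$ is neither $C^{1,1}$ nor $C^{1,\alpha}$ on $\bar\Omega$. In nondivergence form the zeroth-order coefficient then contains $\chi\,\frac{x}{|x|}\cdot\nabla v$. This term is bounded but in general discontinuous at $0$, so the Schauder step breaks down there.

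This is not just a missing justification. If $\nabla v(0,t)\neq0$, then $\nabla\cdot(|x|u\nabla v)(x)\to e\cdot(u\nabla v)(0,t)$ as $x\to0$ along a direction $e$. That limit depends on $e$, so it cannot equal the continuous function $\chi^{-1}(\Delta u-u_t)$ of a $C^{2,1}$ solution with $u(0,t)>0$. Hence for non-radial data in $n=3$ there is no classical solution at the origin.

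Radial symmetry therefore has to enter in Step 3. It is actually not needed in Step 4: $\bigl|\frac{x}{|x|}\cdot\nabla v\bigr|\le|\nabla v|$ is bounded for any data, and boundedness is all the weak and strong maximum principles and the Hopf lemma require. The repair is as follows. Take $u_0$ radial, as the section assumes. The solution stays radial by uniqueness and the rotation invariance of (\ref{S}). Then $\frac{x}{|x|}\cdot\nabla v=\partial_r v(|x|,t)$, where $\partial_r v(\cdot,t)\in C^1([0,L])$ and $\partial_r v(0,t)=0$. This function is Lipschitz in $x$, so all coefficients are Hölder and Schauder applies.

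Separately, your replacement of $-\Delta v=u$ by $-\Delta v=u-\frac{1}{|\Omega|}\int_\Omega u$ is a legitimate repair of (\ref{S}) as written, since there $\partial v/\partial n=0$ forces $\int_\Omega u=0$. But it is not the system the paper uses next. Equation (\ref{SM}) with $M(L,t)=\theta$, and the identity $|\nabla v|=\sigma_n^{-1}r^{1-n}M(r,t)$, correspond to $-\Delta v=u$ without mean subtraction, so that $\partial_r v(L,t)\neq0$, together with the no-flux condition $\partial_n u-\chi|x|^{n-2}u\,\partial_n v=0$ as in (\ref{initi}). Your argument carries over to that reading:
\begin{itemize}
\item the bound $\|\nabla v\|_\infty\le C\|u\|_q$ for $q>n$ still holds, from the Newtonian potential or the radial formula;
\item the Duhamel term, defined by duality through the Neumann semigroup, encodes the no-flux condition;
\item Hopf's lemma still applies, because $\partial_n u$ is a bounded multiple of $u$ on $\partial\Omega$.
\end{itemize}
Under your reading instead, (\ref{SM}) picks up an extra term proportional to $-r^{n-1}M_r\le0$, which is harmless for the later comparison argument. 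Either way, you should state explicitly which system the proposition is about.
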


The maximal existence time $T_{\max}$ of the classical solution is positive or infinite.

\begin{lemma}
\label{L1}
Let $u$ be a solution to (\ref{S}). If $T_{\max}<\infty$, then $u$ satisfies that
\[
\lim_{t\rightarrow T_{\max}}\Vert u(\cdot,t)\Vert_{\infty}=\infty.
\]
\end{lemma}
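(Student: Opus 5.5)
We argue by contraposition: assuming $T_{\max}<\infty$ while $\Vert u(\cdot,t)\Vert_{\infty}$ stays bounded along some sequence (equivalently, $\sup_{t<T_{\max}}\Vert u(\cdot,t)\Vert_\infty<\infty$ after a short-time continuation argument), we show that the solution can be extended beyond $T_{\max}$, which contradicts maximality. The structure follows the classical argument of \cite{Nagai} for constant $\chi$. The only new feature is the weight $|x|^{n-2}$, and on the bounded ball $\Omega=B(0,L)$ it is a bounded smooth-enough coefficient with $0\le |x|^{n-2}\le L^{n-2}$.

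First, we fix the local theory in a form that quantifies the existence time. For initial data $w_0\in L^\infty(\Omega)$, $w_0\ge0$, we write the mild formulation
\[
u(t)=e^{t\Delta_N}w_0-\chi\int_0^t e^{(t-s)\Delta_N}\nabla\cdot\left(|x|^{n-2}u\nabla v\right)(s)\,ds ,
\]
where $e^{t\Delta_N}$ is the Neumann heat semigroup. We use the standard estimates
\[
\Vert e^{t\Delta_N}\nabla\cdot F\Vert_\infty\le C(1+t^{-1/2})\Vert F\Vert_\infty
\]
together with the elliptic bound $\Vert\nabla v\Vert_\infty\le C\Vert u\Vert_\infty$, which holds for the Neumann problem on the ball with $u$ replaced by $u-\bar u$, as in \cite{Nagai}. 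Since $|x|^{n-2}\le L^{n-2}$, a Banach fixed-point argument on $C([0,T];L^\infty)$ gives a solution on $[0,T]$. The key point is that the existence time $T$ depends only on an upper bound for $\Vert w_0\Vert_\infty$ (and on $\chi,L,n$), not on finer properties of $w_0$. Parabolic regularity then upgrades this mild solution to the classical solution of the Proposition, and uniqueness makes the solutions coincide.

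Second, we run the continuation argument. Suppose $T_{\max}<\infty$ and $\Vert u(\cdot,t)\Vert_\infty\le K$ for all $t<T_{\max}$. Take $t_0=T_{\max}-T(K)/2$ and restart the problem from $u(\cdot,t_0)$. By the previous step the solution extends to $t_0+T(K)>T_{\max}$, a contradiction. Hence $\limsup_{t\to T_{\max}}\Vert u\Vert_\infty=\infty$. To upgrade the $\limsup$ to a $\lim$, we note that if $\Vert u(t_k)\Vert_\infty\le K$ along a sequence $t_k\to T_{\max}$, then restarting at $t_k$ with $T_{\max}-t_k<T(K)$ again extends the solution past $T_{\max}$. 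So every sequence must blow up, which gives the limit.

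The main obstacle is the first step, namely the uniform lower bound on the local existence time in terms of $\Vert w_0\Vert_\infty$ alone. The difficulty is the divergence-form term with the variable coefficient. If we were to expand the flux as $|x|^{n-2}\nabla u\cdot\nabla v+\ldots$, then $\nabla|x|^{n-2}$ would appear, and it is singular at the origin when $n=3$. We avoid this by always keeping the flux in divergence form and using only $\Vert |x|^{n-2}u\nabla v\Vert_\infty\le L^{n-2}\Vert u\Vert_\infty\Vert\nabla v\Vert_\infty$. The smoothing factor $t^{-1/2}$ is integrable, so the contraction closes for small $T$ depending only on $K$.
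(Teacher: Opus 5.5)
Your argument is correct and follows essentially the route the paper takes when it defers to Nagai. You build a contraction-mapping local theory whose existence time depends only on a bound for $\Vert u_0\Vert_\infty$, with the weight entering only through $|x|^{n-2}\le L^{n-2}$ in the divergence-form flux, and you then restart at $u(\cdot,t_k)$ to continue past $T_{\max}$, which gives the full limit and not just the $\limsup$. One caution: the elliptic part of (\ref{S}) as written ($-\Delta v=u$ with $\partial v/\partial n=0$) is incompatible with $\int_\Omega u>0$, and the paper's later identity $\vert\partial_r v\vert=\sigma_n^{-1}r^{1-n}M(r,t)$ corresponds to $v=K_n\ast u$ with a no-flux condition as in (\ref{initi}), not to your $u-\bar u$ reading. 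Your proof still transfers verbatim: you only use $\Vert\nabla v\Vert_\infty\le C\Vert u\Vert_\infty$, which is immediate for $v=K_n\ast u$ since $\nabla K_n\in L^1(B(0,2L))$, and your Duhamel formula, with $e^{t\Delta_N}\nabla\cdot$ defined by duality, already encodes the no-flux condition (also, work in $L^\infty((0,T);L^\infty(\Omega))$ rather than $C([0,T];L^\infty(\Omega))$, since the heat semigroup is not strongly continuous on $L^\infty$).
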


\begin{lemma}
\label{L2}
Let $u$ be a solution to (\ref{S}) and $u_{0}\in L^{1}(\Omega)\cap L^{\infty}(\Omega)$. Suppose that
\[
\sup_{0<t<T_{\max}}\Vert\nabla v(\cdot,t)\Vert_{\infty}<\infty.
\]
Then it holds that
\[
\sup_{0<t<T_{\max}}\Vert u(\cdot,t)\Vert_{\infty}<\infty.
\]
\end{lemma}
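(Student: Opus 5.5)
We plan to derive Lemma \ref{L2} from a Moser--Alikakos iteration on the $L^{p}$ norms of $u$, treating the drift $\chi|x|^{n-2}\nabla v$ as a bounded vector field. The setting makes this natural. $\Omega=B(0,L)$ is bounded, so the weight $\chi|x|^{n-2}$ is bounded by $\chi L^{n-2}$ on $\bar\Omega$. The hypothesis then says exactly that
\[
b(x,t):=\chi|x|^{n-2}\nabla v(x,t)
\]
satisfies $B:=\sup_{0<t<T_{\max}}\|b(\cdot,t)\|_{\infty}<\infty$. The equation becomes $u_t=\Delta u-\nabla\cdot(ub)$. The boundary terms vanish because $\partial_n u=\partial_n v=0$ on $\partial\Omega$, so the flux $\nabla u-ub$ has zero normal component. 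The function $v$ enters only through $b$, and the variable coefficient never needs to be differentiated; this is the reason the argument of \cite{Nagai} carries over.

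\textbf{Energy estimate.} For $p\geq 2$ (first $p=2$, then $p=2^{k}$), multiply by $u^{p-1}$, integrate over $\Omega$, and integrate by parts using the no-flux condition:
\[
\frac{1}{p}\frac{d}{dt}\int_\Omega u^{p}+\frac{4(p-1)}{p^{2}}\int_\Omega|\nabla u^{p/2}|^{2}=(p-1)\int_\Omega u^{p-1}b\cdot\nabla u .
\]
Write $u^{p-1}\nabla u=\frac{2}{p}u^{p/2}\nabla u^{p/2}$ and apply Young's inequality. Half of the gradient term absorbs the right-hand side, up to a remainder $C(p-1)B^{2}\int u^{p}$. Next, the Gagliardo--Nirenberg inequality on the ball,
\[
\|w\|_{2}^{2}\leq \varepsilon\|\nabla w\|_{2}^{2}+C_\varepsilon\|w\|_{1}^{2},
\]
applied to $w=u^{p/2}$ with $\varepsilon\sim p^{-2}$, together with a further $\int u^{p}$ subtracted on both sides, yields
\[
\frac{d}{dt}\int u^{p}+\int u^{p}\leq C p^{\alpha}\Bigl(\int u^{p/2}\Bigr)^{2}
\]
for some constant $C$ independent of $p$ and some fixed power $\alpha$.

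\textbf{Iteration.} The starting point is mass conservation, $\int u=M$, which follows by integrating the equation with the no-flux condition. With this, the differential inequality and Gronwall give, for $p_k=2^{k}$,
\[
\sup_t\int u^{p_k}\leq \max\Bigl\{\int u_0^{p_k},\; Cp_k^{\alpha}\bigl(\sup_t\int u^{p_{k-1}}\bigr)^{2}\Bigr\}.
\]
Set $A_k:=\max\bigl\{\|u_0\|_\infty,\sup_t\|u\|_{p_k}\bigr\}$, using $\int u_0^{p}\leq |\Omega|\|u_0\|_\infty^{p}$. Taking $p_k$-th roots gives $A_k\leq (C 2^{k\alpha})^{1/2^{k}}A_{k-1}$. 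Because $\sum_k k2^{-k}<\infty$, the product $\prod_k(C2^{k\alpha})^{2^{-k}}$ converges. Therefore $\sup_k A_k<\infty$, and letting $k\to\infty$ yields $\sup_{0<t<T_{\max}}\|u(\cdot,t)\|_\infty<\infty$. Here $u_0\in L^1\cap L^\infty$ is used both for the base step and for the initial data at every level.

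\textbf{Main obstacle.} The delicate point is tracking the $p$-dependence of the constants, so that the iteration constants grow at most like $p^{\alpha}$. This requires the Gagliardo--Nirenberg constant, with the chosen $\varepsilon$, to depend polynomially on $p$, which is standard on a smooth bounded domain. The time-uniformity must also be checked on an interval $(0,T_{\max})$ that may be infinite. This is why the damping term $+\int u^{p}$ is kept on the left-hand side, rather than using Gronwall with exponential growth.
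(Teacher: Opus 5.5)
Your proof is correct. It is the standard Moser--Alikakos $L^{p}$ iteration, which is what the paper implicitly relies on when it omits the proof and defers to Nagai's constant-$\chi$ argument; you also correctly isolate the only new ingredient, that $\chi|x|^{n-2}\leq\chi L^{n-2}$ on the ball, so the drift $b$ stays bounded and never has to be differentiated.
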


We assume that $u_{0}$ is radial and that $\Omega$ is a bounded open ball.

\begin{theorem}
Let $\Omega=\left\{  x\in\boldsymbol{R}^{N}||x\mid<L\right\}  ,0<L<\infty ,N\geq2$ and $u_{0}\in L^{1}(\Omega)\cap L^{\infty}(\Omega)$. Assume that $\int_{\Omega}u_{0}dx<\frac{2n\sigma_{n}}{\chi}$, then the solution $u$ to (\ref{S}) exists globally in time and satisfies $\sup_{t>0}\Vert u(\cdot ,t)\Vert_{\infty}<\infty.$
\end{theorem}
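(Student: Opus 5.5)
By Lemma \ref{L2}, it suffices to show that $\sup_{0<t<T_{\max}}\Vert\nabla v(\cdot,t)\Vert_{\infty}<\infty$. Then Lemma \ref{L1} rules out $T_{\max}<\infty$, and the uniform $L^\infty$ bound follows.

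The plan is to pass to radial mass coordinates, as in \cite{Nagai}. For a radial solution, set $r=|x|$ and define the cumulative mass
\[
Q(r,t):=\int_{|y|<r}u(y,t)\,dy=n\sigma_n\int_0^r \rho^{n-1}u(\rho,t)\,d\rho ,
\]
which satisfies $0\le Q\le M$ and $Q(L,t)=M$. In the Neumann problem the elliptic equation reads $-\Delta v=u-\frac{1}{|\Omega|}\int_\Omega u$ in the usual interpretation. Integrating over $B(0,r)$ gives
\[
-n\sigma_n r^{n-1}v_r=Q(r,t)-M\frac{r^n}{L^n}.
\]
Hence
\[
|v_r(r,t)|\le \frac{M}{n\sigma_n r^{n-1}}.
\]
This bound is singular at $r=0$ and is useless there unless $Q$ is controlled near the origin. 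The real task is therefore a pointwise estimate $Q(r,t)\le C r^{n}$, or at least $Q(r,t)\le C r^{n-1}$, uniformly in time. Away from the origin, $|\nabla v|$ is bounded simply by the mass.

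The main step is to write the PDE satisfied by $Q$. Integrating the $u$-equation over $B(0,r)$ and using $r^{n-1}u = Q_r/(n\sigma_n)$ gives
\[
Q_t = Q_{rr}-\frac{n-1}{r}Q_r+\frac{\chi r^{n-2}}{n\sigma_n r^{n-1}}\,Q_r\Bigl(Q-M\frac{r^n}{L^n}\Bigr).
\]
The key feature of $\chi(x)=\chi|x|^{n-2}$ is that the factor $r^{n-2}$ cancels part of the $r^{1-n}$ singularity, so the drift term behaves like $\frac{\chi}{n\sigma_n}\frac{Q\,Q_r}{r}$ in every dimension. This is exactly the two-dimensional structure, where the critical threshold comes from comparing $\frac{\chi Q}{n\sigma_n}$ with the linear coefficient. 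In the variable $s=r^2$ (or $r^n$), I would compare $Q$ with a supersolution of the form
\[
\overline{Q}(r)=\frac{A r^2}{1+Br^2}\quad\text{or}\quad \overline{Q}=\min\{Ar^2,M\}\text{-type profiles}.
\]
The hypothesis $M<2n\sigma_n/\chi$ should make the nonlinear drift dominated by the diffusion term $-\frac{n-1}{r}Q_r+Q_{rr}$. Concretely, with $s=r^2$ one checks that the supersolution inequality reduces to
\[
\frac{\chi}{n\sigma_n}\overline Q<2\quad\text{(up to constants)},
\]
which holds because $\overline Q\le M<2n\sigma_n/\chi$. With such a $\overline Q$, and $A$ chosen large enough that $Q(\cdot,0)\le\overline Q$ (possible since $u_0\in L^\infty$ gives $Q(r,0)\le C r^n\le Cr^2L^{n-2}$), the comparison principle for this one-dimensional parabolic equation yields
\[
Q(r,t)\le \overline Q(r)\le A r^2 \quad\text{for all } t.
\]
For $n\ge3$ the bound $Ar^2$ is weaker than $r^{n-1}$ near $0$. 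I would therefore bootstrap: the bound $Q\le Ar^2$ gives $|\nabla v|\lesssim r^{3-n}$, hence $\nabla v\in L^q$ for large $q$ uniformly in time. An $L^p$ energy estimate, or Lemma \ref{L2}-type Moser iteration, then gives $u\in L^\infty$ uniformly, and from this $Q\le Cr^n$ and $\Vert\nabla v\Vert_\infty\le C$.

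I expect the main obstacle to be the comparison step. It requires getting the exact constant so that $M<2n\sigma_n/\chi$ is precisely what makes $\overline Q$ a supersolution, and it requires handling the boundary terms, namely $Q(0)=0$ and $Q(L)=M\le\overline Q(L)$, together with the extra $-Mr^n/L^n$ term, which has a favorable sign. If the clean constant does not come out with $A r^2/(1+Br^2)$, I would instead use the $s=r^n$ variable as in \cite{Nagai} and a linear-in-$s$ supersolution near the origin, glued to the constant $M$.
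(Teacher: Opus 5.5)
Your skeleton is the paper's: reduce to a uniform bound on $\nabla v$ via Lemmas \ref{L1} and \ref{L2}, pass to the cumulative mass, and compare with a stationary supersolution. Your Neumann-compatible version $-\Delta v=u-\frac{1}{|\Omega|}\int_\Omega u$, whose extra term has a favorable sign, is more careful than the paper. The gap is in the choice of supersolution.

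\textbf{The $r^2$ profile.} With $\mu=\chi/(n\sigma_n)$, $s=r^2$ and $\overline{Q}=\frac{As}{1+Bs}$, one computes
\[
\overline{Q}_{rr}-\frac{n-1}{r}\overline{Q}_r+\frac{\mu}{r}\overline{Q}\,\overline{Q}_r=\frac{2A}{(1+Bs)^3}\Bigl[(2-n)+s\bigl(\mu A-(n+2)B\bigr)\Bigr].
\]
So for the large $B$ needed to dominate general data, the requirement is $\mu A/B\le n+2$, not ``$\mu\overline{Q}<2$''.
\begin{itemize}
\item Under your literal reading of $\sigma_n$ as the ball volume, the hypothesis is $\mu M<2$, and this still suffices.
\item In the paper's proof, however, $\sigma_n$ is the sphere area, since $M(r,t)=\sigma_n\int_0^r u\rho^{n-1}d\rho$. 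The threshold actually established is then $\mu M<2n$ ($8\pi/\chi$ for $n=2$). The $r^2$ family cannot reach it for $n\ge3$, because $n+2<2n$.
\end{itemize}

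\textbf{Behaviour at the origin.} More seriously, $Q\le Ar^2$ only gives $|\nabla v|\le Cr^{3-n}$. For $n\ge4$ the hypothesis of Lemma \ref{L2} is therefore not verified. Your bootstrap is false as stated: $r^{3-n}\in L^q(B(0,L))$ only for $q<n/(n-3)\le n$, short of the $q>n$ a Moser iteration needs. It could be rescued by noting that the actual drift $\chi|x|^{n-2}\nabla v$ is $O(r)$. That, however, requires a bounded-drift variant of Lemma \ref{L2}, which you did not formulate.

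\textbf{The fallback.} In $s=r^n$ the operator becomes $s^{1-2/n}\bigl(n^2sQ_{ss}+n\mu\,Q_s(Q-Ms/L^n)\bigr)$. On a linear piece $\overline{Q}=Ks$ it equals $n\mu Ks^{2-2/n}(K-M/L^n)$, which is positive whenever $K>M/L^n$. General data force $K>M/L^n$, so that piece is a subsolution, not a supersolution.

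\textbf{The missing idea.} It is the paper's: use the exact stationary solution in the variable $r^n$. Multiplying the stationary equation by $r$ and integrating from $0$ gives $r\overline{M}_r=n\overline{M}-\frac{\chi}{2\sigma_n}\overline{M}^2$ (paper's $\sigma_n$). This is solved by $\overline{M}(r)=\frac{2n\sigma_n}{\chi}\frac{kr^n}{1+kr^n}$, whose supremum is exactly the threshold. Hence $\theta<2n\sigma_n/\chi$ allows $k$ large with $\overline{M}(L)>\theta$ and $\overline{M}\ge M(\cdot,0)$.

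In your notation this is the same rational ansatz $\frac{\lambda ks}{1+ks}$, but with $s=r^n$ instead of $r^2$. It is a supersolution whenever $\mu\lambda\le 2n$, with equality giving the paper's exact stationary profile. Because it vanishes like $r^n$ at the origin, comparison directly gives
\[
|\nabla v|\le\sigma_n^{-1}r^{1-n}\overline{M}(r)=\frac{2n}{\chi}\frac{kr}{1+kr^n}\le\frac{2nkL}{\chi}.
\]
So Lemma \ref{L2} applies in every dimension and no bootstrap is needed.
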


\begin{proof}
We define the cumulative mass $M(r,t)$
\[
M(r,t):=\int_{B(0,r)}u(x,t)dx=\sigma_{n}\int_{0}^{r}u(\rho,t)\rho^{n-1}d\rho.
\]
It follows that $M(r,t)$ satisfies
\begin{equation}
\begin{array}
[c]{cc}%
M_{t}=M_{rr}-(n-1)r^{-1}M_{r}+\chi\sigma_{n}^{-1}r^{-1}MM_{r}, & 0<r<L,0<t<T_{\max},\\
M(0,t)=0,M(L,t)=\theta, & 0<t<T_{\max},\\
M(r,0)=\sigma_{n}\int_{0}^{r}u_{0}(\rho)\rho^{n-1}d\rho, & 0\leq r\leq L.
\end{array}
\label{SM}
\end{equation}
Next, consider the following ODE
\[
0=\overline{M}_{rr}-(n-1)r^{-1}\overline{M}_{r}+\chi\sigma_{n}^{-1}r^{-1}\overline{M}\overline{M}_{r},
\]
Then
\begin{align*}
0 = & \int_{0}^{r}\rho\overline{M}_{\rho\rho}d\rho - (n-1)\int_{0}^{r}\overline{M}_{\rho}d\rho + \chi\sigma_{n}^{-1}\int_{0}^{r}\overline{M}\overline{M}_{\rho}d\rho \\
 = & r\overline{M}_{r} - \overline{M} - (n-1)\overline{M} + \chi\left( 2\sigma_{n}\right)  ^{-1}\overline{M}^{2} \\
 = & r\overline{M}_{r} - n\overline{M} + \chi\left( 2\sigma_{n}\right) ^{-1}\overline{M}^{2}.
\end{align*}
or equivalently
\[
r\frac{d\overline{M}}{dr} = n\overline{M} - \chi\left( 2\sigma_{n}\right) ^{-1}\overline{M}^{2}=\overline{M}\left(  n - \chi\left( 2\sigma_{n}\right) ^{-1}\overline{M}\right).
\]
Separation of variables leads to
\[
\overline{M}(r)=\frac{2n\sigma_{n}}{\chi}\frac{kr^{n}}{1+kr^{n}}<\frac{2n\sigma_{n}}{\chi}.
\]
where $k$ is the constant of integration. We note that $k$ can be chosen sufficiently large such that
\[
\theta<\overline{M}(L) \text{ and } M(r,0)\leq Cr^{n}\leq\overline{M}(r) \text{ for } 0\leq r\leq L,
\]
where $C=n^{-1}\sigma_{n}\left\Vert u_{0}\right\Vert _{L^{\infty}}.$ By the comparison theorem
\[
M(r,t)\leq\overline{M}(r) \text{ for } 0\leq r\leq L, 0\leq t<T_{\max}.
\]
Consequently,
\begin{align*}
\left\vert \nabla v(x,t)\right\vert = & \left\vert \partial_{r}v(r,t)\right\vert =\sigma_{n}^{-1}r^{1-n}M(r,t) \\
 \leq & \sigma_{n}^{-1}r^{1-n}\overline{M}(r)\leq\frac{2n\sigma_{n}}{\chi}\frac{kr}{1+kr^{n}}\leq\frac{2n\sigma_{n}kL}{\chi}.
\end{align*}
From this, Lemmas \ref{L1} and \ref{L2}, we get this theorem.
\end{proof}


\begin{thebibliography}{99}
\bibitem {BilerLocal} Biler, P. (1991). Existence and asymptotics of solutions for a parabolic-elliptic system with nonlinear no-flux boundary conditions. Universit\'{e} de Paris-sud, D\'{e}partement de math\'{e}matiques.

\bibitem {Biler} Biler, P., \& Espejo Arenas, E., \& Guerra, I. (2013). Blowup in higher dimensional two species chemotactic systems. Communications on Pure and Applied Analysis, 12, 89.

\bibitem {Cohen} Cohen, D. J., James Nelson, W., \& Maharbiz, M. M. (2014). Galvanotactic control of collective cell migration in epithelial monolayers. Nature materials, 13(4), 409-417.

\bibitem {Charras} Charras, G., \& Sahai, E. (2014). Physical influences of the extracellular environment on cell migration. Nature reviews Molecular cell biology, 15(12), 813-824.

\bibitem {Cuchi} Cucchi, A., Etchegaray, C., Meunier, N., Navoret, L., \& Sabbagh, L. (2020). Cell migration in complex environments: chemotaxis and topographical obstacles. ESAIM: Proceedings and Surveys, 67, 191-209.

\bibitem {Nagai} Nagai, T. (1995). Blow-up of radially symmetric solutions to a chemotaxis system. Adv. Math. Sci. Appl., 5, 581.

\bibitem {Neta} Neta, B. (1980). On three inequalities. \textit{Computers \& Mathematics with Applications, 6}(3), 301-304.

\bibitem {Petrie} Petrie, R. J., Doyle, A. D., \& Yamada, K. M. (2009). Random versus directionally persistent cell migration. Nature reviews Molecular cell biology, 10(8), 538-549.

\bibitem {Poff} Poff, K. L., \& Skokut, M. (1977). Thermotaxis by pseudoplasmodia of Dictyostelium discoideum. Proceedings of the National Academy of Sciences, 74(5), 2007-2010.

\bibitem {Pohoazev} Pohozaev, S. (1965). Eigenfunctions of the equation $u+\lambda f(u)=0$. In Soviet Math. Dokl (Vol. 6, pp. 1408-1411).

\bibitem {Rodriguez} Lara Rodriguez, L., \& Schneider, I. C. (2013). Directed cell migration in multi-cue environments. Integrative biology, 5(11), 1306-1323.

\bibitem {Senba} T. Senba (2005). Blowup behavior of solutions to the rescaled J\"{a}ger-Luckhaus system. Funkcialaj Ekvacioj, 48 247-271

\bibitem {Van} Van Haastert, P. J., \& Devreotes, P. N. (2004). Chemotaxis: signalling the way forward. Nature reviews Molecular cell biology, 5(8), 626-634.

\bibitem {Wondergem} Wondergem, J. A., Mytiliniou, M., Wit, F. C. D., Reuvers, T. G., Holcman, D., \& Heinrich, D. (2019). Chemotaxis and topotaxis add vectorially for amoeboid cell migration. bioRxiv, 735779.
\end{thebibliography}
\end{document}